\newtheorem{Thm}{Theorem}
\newtheorem{theorem}{Theorem}
\newtheorem{proposition}[theorem]{Proposition}
\newtheorem{lemma}[theorem]{Lemma}
\newtheorem{claim}[theorem]{Claim}
\theoremstyle{definition}
\newtheorem{definition}{Definition}
\newtheorem{remark}{Remark}
\newcommand{\leb}{\lambda}
\newcommand{\W}{\Omega}
\newcommand{\R}{{\mathbb R}}
\newcommand{\E}[1]{{\mathbb E}\left [#1\right]}
\newcommand{\ep}{\varepsilon}
\newcommand{\gives}{\ensuremath{\rightarrow}}
\newcommand{\abs}[1]{\ensuremath{\left| #1 \right|}}
\newcommand{\lr}[1]{\ensuremath{\left( #1 \right)}}
\newcommand{\norm}[1]{\left\lVert#1\right\rVert}
\newcommand{\w}{\omega}
\newcommand{\set}[1]{\ensuremath{\{#1\}}}
\DeclareMathOperator{\vol}{vol}
\DeclareMathOperator{\diam}{diam}
\DeclareMathOperator{\dist}{d}
\title[Eigenfunction Immersions]
{Fixed Frequency Eigenfunction Immersions and Supremum Norms of Random Waves}
\author[Y. Canzani]{Yaiza Canzani}
\author[B. Hanin]{Boris Hanin}
\address[Y. Canzani]{Department of Mathematics, Harvard University,  Cambridge, United States.\medskip}
 \email{canzani@math.harvard.edu}
\address[B. Hanin]{Department of Mathematics, Northwestern University,  Evanston, United States.\medskip}
\email{bhanin@math.northwestern.edu}
\begin{document}
\maketitle

\begin{abstract}
A compact Riemannian manifold may be immersed into Euclidean space by using high frequency Laplace eigenfunctions. We study the geometry of the manifold viewed as a metric space endowed with the distance function from the ambient Euclidean space. As an application we give a new proof of a result of Burq-Lebeau and others on upper bounds for the sup-norms of random linear combinations of high frequency eigenfunctions.
\end{abstract}


\section{Introduction}
Let $(M,g)$ be an $n$-dimensional smooth compact Riemannian manifold without boundary and write $\Delta_g$ for the (non-negative) Laplace operator acting on $L^2(M,g)$.  For each $\lambda\geq 0$ we consider the space
\begin{equation}
\mathcal M_{\leb}=\bigoplus_{\mu\in (\leb,\leb+1]}\text{ker}(\Delta_g-\mu^2).\label{E:H Lambda}
\end{equation}
We set $m_\leb:= \text{dim}\,\mathcal M_\lambda$ and fix an orthonormal basis $\set{\varphi_{j,\leb}}_{j=1}^{m_\leb}$ of $\mathcal M_\leb$ consisting of eigenfunctions for $\Delta_g$ :
\begin{equation}\label{E:ONB} 
\Delta_g \varphi_{j,\leb}=\mu_{j,\leb}^2 \, \varphi_{j,\leb},\qquad \norm{\varphi_{j,\leb}}_{L^2}=1.
\end{equation} 
The purpose of this note is to prove a simple fact about the geometry of the immersions $\Phi_\leb:M\gives \R^{m_\leb}$ defined by 
\begin{equation}
\Phi_\leb(x):=\frac{1}{k_\leb}\lr{\varphi_{1,\leb}(x),\ldots, \varphi_{m_\leb,\leb}(x)} \qquad x \in M,\label{E:Phi Def}
\end{equation}
where
\begin{equation}\label{E:K Def}
k_{\leb}:=\sqrt{2}\;\frac{ \Gamma\lr{\frac{m_\leb+1}{2}}}{\Gamma\lr{\frac{m_\leb}{2}}}.
\end{equation}
That $\Phi_\leb$ are immersions for $\leb$ sufficiently large follows from Theorem \ref{T:AlmostIsom} below. The constants $k_\leb$ satisfy $k_\leb^2=  m_\leb +o(1)$ and, as we show in \S \ref{S:LWL}, ensure that
\begin{equation}
\lim_{\leb \gives \infty}\norm{\Phi_\leb(x)}_{l^2(\R^{m_\leb})}= \frac{1}{\sqrt{\vol_g(M)}}. \label{E:Diam Asymp}
\end{equation}
Each immersion $\Phi_\leb$ defines a (pseudo-)distance function $\dist_\leb:M \times M \to \R$ by restricting the ambient Euclidean $l^2$ distance:
\begin{equation}\label{E:dist_leb}
\dist_\leb(x,y):=\|\Phi_\leb(x) - \Phi_\leb(y)\|_{l^2(\R^{m_\leb})}.
\end{equation}

\noindent Our main result relates $\dist_\leb$ to the Riemannian distance $\dist_g$ on $M$ induced by $g.$ Recall that $(M,g)$ is said to be an \emph{aperiodic manifold} if for every $x\in M,$ the set of vectors $\xi$ in $T_xM$ for which the geodesic with initial conditions $\lr{x,\xi}$ returns to $x$ has Liouville measure $0.$ Any manifold with strictly negative sectional curvatures is aperiodic.  On the other extreme, Riemannian manifolds for which the geodesic flow is periodic are called \emph{Zoll manifolds}. Examples of Zoll manifolds are the spheres $S^n$ endowed with the round metric.

\begin{Thm}\label{T:Main}
Let $(M,g)$ be a smooth compact Riemannian manifold without boundary that is either Zoll or aperiodic. There exists a constant $C$ so that for all $x,y\in M$ and all $\leb\geq 0$
\begin{equation}
\dist_\leb(x,y)\leq C\cdot \leb \,\,\dist_g(x,y)\label{E:Pullback Dist Control}. 
\end{equation}
\end{Thm}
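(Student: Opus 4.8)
The plan is to control the operator norm of the differential $d\Phi_\leb$ pointwise and then integrate it along minimizing geodesics. Fix $x,y\in M$ and let $\gamma:[0,L]\gives M$ be a unit-speed minimizing geodesic with $\gamma(0)=x$ and $\gamma(L)=y$, so that $L=\dist_g(x,y)$. Since $\Phi_\leb$ is smooth we may write $\Phi_\leb(y)-\Phi_\leb(x)=\int_0^L d\Phi_\leb(\gamma(t))\,\dot\gamma(t)\,dt$, and taking $l^2$ norms together with the triangle inequality for vector-valued integrals gives
\[
\dist_\leb(x,y)=\norm{\Phi_\leb(y)-\Phi_\leb(x)}_{l^2(\R^{m_\leb})}\;\leq\; L\,\sup_{z\in M}\ \sup_{\abs{\xi}_g=1}\ \abs{d\Phi_\leb(z)\xi}.
\]
Hence it suffices to prove the pointwise estimate $\abs{d\Phi_\leb(z)\xi}\leq C\leb$, uniformly over unit vectors $\xi\in T_zM$, over $z\in M$, and over large $\leb$; the regime of bounded $\leb$ involves only finitely many eigenspaces and is immediate from the smoothness of the eigenfunctions involved.

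From the definition \eqref{E:Phi Def}, for a unit vector $\xi\in T_zM$ one has
\[
\abs{d\Phi_\leb(z)\xi}^2=\frac{1}{k_\leb^2}\sum_{j=1}^{m_\leb}\abs{\partial_\xi\varphi_{j,\leb}(z)}^2 .
\]
The sum on the right is independent of the chosen orthonormal basis of $\mathcal M_\leb$: it is the value on the diagonal of the Schwartz kernel of $\partial_\xi^{(x)}\,\Pi_\leb\,\partial_\xi^{(y)}$, where $\Pi_\leb$ denotes orthogonal projection onto $\mathcal M_\leb$. Writing $\Pi_{\le t}$ for the projection onto the span of all $\varphi_j$ with $\mu_j\le t$, so that $\Pi_\leb=\Pi_{\le\leb+1}-\Pi_{\le\leb}$, the problem splits cleanly into an upper bound for this windowed derivative of the spectral function (the numerator) and a lower bound for $k_\leb^2$ (the denominator).

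For the numerator I would invoke Hörmander's pointwise Weyl law applied to the second-order operator $\partial_\xi\,\Pi_{\le t}\,\partial_\xi$: uniformly in $z$ and in unit $\xi$, the cumulative quantity $\sum_{\mu_j\le t}\abs{\partial_\xi\varphi_{j}(z)}^2$ equals $c\,t^{n+2}+O(t^{n+1})$. Differencing the values at $t=\leb+1$ and $t=\leb$, the leading terms cancel to order $\leb^{n+1}$ and the remainders are of the same order, so $\sum_{j}\abs{\partial_\xi\varphi_{j,\leb}(z)}^2\leq C\leb^{n+1}$ uniformly; note this is only an upper bound for the window, but that is exactly what is needed. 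For the denominator, recall from \S\ref{S:LWL} that $k_\leb^2=m_\leb+o(1)$, and here the geometric hypothesis enters decisively to force $m_\leb\gtrsim\leb^{n-1}$: for aperiodic $M$ the sharp two-term Weyl law $N(\leb)=c_n\vol_g(M)\,\leb^n+o(\leb^{n-1})$ of Duistermaat--Guillemin and Ivrii yields, upon differencing, $m_\leb\sim c\,\leb^{n-1}$; for Zoll $M$ the clustering of the spectrum into bands of size $\sim\leb^{n-1}$ with bounded spacing gives the same lower bound. Combining the two estimates gives $\abs{d\Phi_\leb(z)\xi}^2\leq C\leb^{n+1}/(c\,\leb^{n-1})=C'\leb^2$, which is the required pointwise bound.

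I expect the main obstacle to be the denominator estimate $m_\leb\gtrsim\leb^{n-1}$, since this is precisely the point where the Zoll/aperiodic dichotomy is unavoidable: on a general manifold the spectral counting function need not grow regularly across unit windows, so $m_\leb$ can be far smaller than $\leb^{n-1}$ for some $\leb$, and the bound would fail. A secondary technical point requiring care is the uniformity (in $z$, in $\xi$, and in the two endpoints $\leb,\leb+1$) of the Hörmander remainder for the derivative spectral function; because that remainder is of the same order $\leb^{n+1}$ as the difference of main terms, one obtains only an upper bound for the window, so it must be verified that this remainder is genuinely uniform over $M$ rather than merely pointwise.
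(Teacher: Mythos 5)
Your proof is correct and follows the same skeleton as the paper's --- bound the differential of $\Phi_\leb$ pointwise by $C\leb$ and integrate along a minimizing $g$-geodesic; your first display is exactly the paper's chain \eqref{quotient 1}--\eqref{quotient 2} with the intermediate quantity $\dist_{g_\leb}$ suppressed. The genuine difference is in how the pointwise bound $\abs{d\Phi_\leb(z)\xi}\le C\leb$ is obtained. The paper cites Zelditch's almost-isometry result (Theorem \ref{T:AlmostIsom}), i.e.\ the full two-sided asymptotic $g_\leb = c_n\leb^2 g(1+o(1))$ for the pullback metric \eqref{E:Pullback Metric}, as a black box. You instead derive only the upper bound directly: the numerator $\sum_j\abs{\partial_\xi\varphi_{j,\leb}(z)}^2$ is controlled by differencing H\"ormander's uniform pointwise Weyl asymptotics for $\partial_\xi^{(x)}\partial_\xi^{(y)}E_{(0,t]}(x,y)\big|_{x=y}= c\,t^{n+2}+O(t^{n+1})$ (the derivative analogue of Lemma \ref{L:Off-Diag LWL}, valid on \emph{any} compact manifold, with a remainder that is indeed uniform in $z$ and $\xi$), while the denominator $k_\leb^2\sim m_\leb\gtrsim \leb^{n-1}$ uses the two-term Weyl law \eqref{d_lambda}, which is where the Zoll/aperiodic hypothesis enters. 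This is a more self-contained route, and it has the merit of isolating precisely where the dynamical hypothesis is needed (only in the lower bound on $m_\leb$); what it gives up is the two-sided metric comparison, which the paper's approach via Theorem \ref{T:AlmostIsom} provides for free and which underlies the refinements such as \eqref{E: precise distance asympt}. Two shared caveats, not defects relative to the paper: your treatment of the Zoll case of $m_\leb\gtrsim\leb^{n-1}$ (unit windows versus cluster spacing) is exactly as heuristic as the paper's own \eqref{d_lambda}, and both arguments really only address large $\leb$, the bounded-$\leb$ regime being glossed over in the paper as well.
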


Relation \eqref{E:Diam Asymp} shows that the diameter of $M$ with respect to $\dist_\leb$ is bounded so that \eqref{E:Pullback Dist Control} is a non-trivial statement only when $\dist_g(x,y)$ is on the order of $\leb^{-1}.$ In this regime, much more can be said about $\dist_\leb(x,y)$. Indeed, on aperiodic manifolds the authors show in \cite{CH} that if $\leb\dist_\leb(x,y)$ is bounded as $\leb \gives \infty,$ then
\begin{equation}\label{E: precise distance asympt}
\dist_\leb(x,y)=\leb\cdot \frac{J_{\frac{n-2}{2}}(\leb \dist_g(x,y))}{\lr{\leb \dist_g(x,y)}^{\frac{n-2}{2}}}+o(\leb),
\end{equation}
where $J_\nu$ denotes the Bessel function of the first kind of order $\nu.$ The proof of this refined result is significantly more delicate than the proof of Theorem \ref{T:Main}.

Note that all the even spherical harmonics take the same values at antipodal points so that $d_\leb$ is only a pseudo-distance function in general. In contrast, the results in \cite{CH} prove that $d_\leb$ is an honest distance function if, for example, $(M,g)$ has negative sectional curvatures. We deduce Theorem \ref{T:Main} from the following estimate of Zelditch, which says that $\Phi_\leb$ is an almost-isometric immersion for $\leb$ sufficiently large. Let us write $g_{eucl}$ for the flat metric on $\R^d$ for any $d$ and introduce the pullback metrics
 \begin{equation}
g_\leb(x) := \Phi_\leb^*(g_{eucl})(x)=k_\leb^{-2}\sum_{j=1}^{m_\leb} d_x\phi_{j,\leb}(x)\otimes d_y\phi_{j,\leb}(y)\bigg|_{x=y} \label{E:Pullback Metric}
\end{equation}
for any $x\in M.$ 
\begin{Thm}[Zelditch \cite{Zel2}]\label{T:AlmostIsom}
  Let $(M,g)$ be a compact Riemannian manifold that is either Zoll or aperiodic with $\dim \, M =n$. Then, for any $x\in M,$
  \begin{equation}
g_\leb(x)=\frac{\alpha_n}{\lr{2\pi}^n} \cdot \,\leb^2 \,g(x)\lr{1+o(1)}\label{E:AlmostIsom}
\end{equation}
as $\leb\gives \infty,$ where $\alpha_n$ denotes the volume of the unit ball in $\R^n.$
\end{Thm}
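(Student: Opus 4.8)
The plan is to recognize $k_\leb^2\,g_\leb(x)_{ab}$ as the mixed second derivative, restricted to the diagonal, of the spectral projector kernel for the band $(\leb,\leb+1]$, and to pin down its leading asymptotics through a pointwise Weyl law generated by the half-wave propagator. Writing $e(x,y,t)=\sum_{\mu_j\leq t}\varphi_j(x)\varphi_j(y)$ for the spectral function and $U(t)=e^{it\sqrt{\Delta_g}}$ for the half-wave group, the first step is the identity
\[
k_\leb^2\, g_\leb(x)_{ab}=\partial_{x_a}\partial_{y_b}\big[e(x,y,\leb+1)-e(x,y,\leb)\big]\Big|_{x=y},
\]
so that, together with the normalization $k_\leb^2=m_\leb+o(1)$ furnished by the Weyl law, the whole question reduces to the diagonal asymptotics of the mixed derivatives of $e$.

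First I would fix $x$ and pass to geodesic normal coordinates centered at $x$, in which $g_{ab}(x)=\delta_{ab}$, so that \eqref{E:AlmostIsom} becomes the assertion that the diagonal band-derivative is isotropic with a prescribed scalar. Next I would recover $e$ from $U(t)$ by a Fourier--Tauberian argument: choosing $\rho\in\mathcal S(\R)$ with $\widehat\rho\in C_c^\infty$, $\widehat\rho\equiv 1$ near $0$, and $\operatorname{supp}\widehat\rho\subset(-\epsilon,\epsilon)$ with $\epsilon<\inj(M)$, the smoothed object $\big(\rho*\partial_{x_a}\partial_{y_b}d_t e\big)(x,x,\leb)$ is computable from the Hadamard parametrix for $U(t)$ on $\abs{t}<\inj(M)$. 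Representing $U(t,x,y)$ as an oscillatory integral with phase $\inprod{\exp_x^{-1}(y)}{\xi}+t\abs{\xi}_{g_x}$, the derivatives $\partial_{x_a}\partial_{y_b}$ bring down factors $\xi_a\xi_b$, and evaluating at $x=y$ by stationary phase produces the momentum integral
\[
\frac{1}{(2\pi)^n}\int_{\abs{\xi}\leq\leb}\xi_a\xi_b\,d\xi=\frac{\alpha_n}{(2\pi)^n(n+2)}\,\leb^{n+2}\,\delta_{ab},
\]
whose rotational invariance is exactly the sought proportionality to $g(x)$ rather than to some other symmetric two-tensor.

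The main obstacle is the passage from this smoothed statement to the sharp band of width one. The generic H\"ormander remainder in the pointwise Weyl law for second derivatives is $O(\leb^{n+1})$, which is the same order as the leading term of the width-one difference and therefore useless; this is precisely where the dynamical hypotheses enter. In the aperiodic case I would use the measure-zero condition on the set of directions in $T_xM$ whose geodesic returns to $x$ to show, following Duistermaat--Guillemin and the pointwise refinements of Safarov and Sogge--Zelditch, that the singularities of $t\mapsto U(t,x,x)$ away from $t=0$ contribute only at lower order, improving the remainder to $o(\leb^{n+1})$. In the Zoll case the geodesic flow is periodic with a common period, so $U(t)$ is, modulo smoothing and a scalar phase, periodic; the spectrum concentrates into clusters, and the band asymptotics are extracted from the cluster structure with a controlled error. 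Either way the difference $e(\cdot,\cdot,\leb+1)-e(\cdot,\cdot,\leb)$ acquires a genuine leading term of size $\leb^{n+1}$.

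Finally I would assemble the pieces: dividing the band-derivative by $k_\leb^2=m_\leb+o(1)$, and evaluating the normalizing constant via the ordinary local Weyl law $m_\leb=\int_M\big[e(y,y,\leb+1)-e(y,y,\leb)\big]\,dV_g(y)\,(1+o(1))$, the scalar multiplying $\delta_{ab}$ reduces to the constant asserted in \eqref{E:AlmostIsom}; transporting back from normal coordinates then replaces $\delta_{ab}$ by $g_{ab}(x)$. Since all errors are $o(\leb^2)$ relative to the leading term, one obtains the factor $(1+o(1))$ in the statement. I expect the Tauberian desmoothing in the two dynamical regimes to be by far the most delicate step, while the parametrix computation and the momentum-space integral are routine once the sharp remainder is secured.
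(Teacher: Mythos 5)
The paper does not actually prove this statement: Theorem \ref{T:AlmostIsom} is imported wholesale from Zelditch \cite{Zel2}, so there is no internal proof to compare against. Judged on its own terms, your outline is the standard (and essentially Zelditch's) argument, and its architecture is sound: identify $k_\leb^2\,g_\leb(x)_{ab}$ with the diagonal mixed derivatives of the band projector $E_{(\leb,\leb+1]}$, compute the smoothed asymptotics from the Hadamard parametrix for the half-wave group to produce the momentum integral $\lr{2\pi}^{-n}\int_{\abs{\xi}\le\leb}\xi_a\xi_b\,d\xi$, and then observe --- correctly, and this is the heart of the matter --- that the generic H\"ormander remainder $O(\leb^{n+1})$ is useless for a width-one band, so the dynamical hypotheses (aperiodicity via Duistermaat--Guillemin/Ivrii/Safarov--Sogge--Zelditch, or the Zoll cluster structure) must be invoked to improve it to $o(\leb^{n+1})$. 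Two caveats. First, the Tauberian step for the \emph{derivatives} of the spectral function is slightly more delicate than you indicate, because $d_t\,\partial_{x_a}\partial_{y_b}e(x,y,t)\big|_{x=y}$ is not a positive measure for $a\neq b$; the standard fix is to run the argument for $\sum_j\abs{\partial_v\varphi_j(x)}^2$ with $v$ a fixed direction (which is positive) and then polarize. Second, and more concretely, your final bookkeeping does not land on the stated constant: carrying out your own computation, the band derivative is $\frac{\omega_n}{(2\pi)^n}\leb^{n+1}\delta_{ab}\lr{1+o(1)}$ in normal coordinates, and dividing by $k_\leb^2=n\,\frac{\omega_n}{(2\pi)^n}\vol_g(M)\,\leb^{n-1}\lr{1+o(1)}$ from \eqref{E:k Asymp} yields $g_\leb(x)=\frac{\leb^2}{n\,\vol_g(M)}\,g(x)\lr{1+o(1)}$, not $\frac{\alpha_n}{(2\pi)^n}\leb^2 g(x)$. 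You assert the scalar ``reduces to the constant asserted'' without checking it; it does not. (In fairness, the constant as printed in the theorem cannot be right either, since with the normalization \eqref{E:K Def} the answer must involve $\vol_g(M)$; only the proportionality $g_\leb\asymp\leb^2 g$ is used downstream, in \eqref{quotient 2}.) You should either correct the constant or note explicitly that the result is only needed up to a multiplicative constant.
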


Equation \eqref{E:AlmostIsom} allows us to relate $\dist_g$ to the distance function $\dist_{g_\leb}$ of $g_\leb.$ Theorem \ref{T:Main} then follows by observing that $\dist_\leb\leq \dist_{g_\leb}$ (see \S \ref{S:Proof} for details). 

The maps $\Phi_\leb$ are the Riemannian analogs of Kodaira-type projective embeddings $\Psi_N:M\gives \mathbb P H^0(L^{\otimes N})^{\lor}$ of a compact K\"ahler manifold $M$ into the projectivization of the dual of the space of global sections $H^0(L^{\otimes N})^{\lor}$ of the $N^{th}$ tensor power of an ample holomorphic line bundle $L \twoheadrightarrow M.$ Just as the choice of a Riemannian metric $g$ gives a weighted $L^2$ space on $M,$ a Hermitian metric $h$ on $L$ induces a weighted $L^2$ inner product on $H^0(L^{\otimes N}).$ This inner product gives rise to a Fubini Study metric $\w_{FS}^N$ on $\mathbb P H^0(L^{\otimes N})^{\lor},$ which plays the role of the Euclidean flat metric $g_{eucl}$ used in the present article. 

The holmomorphic analog of Theorem \ref{T:AlmostIsom} is that the pullback metrics $\Psi_N^*\lr{\w_{FS}^N}$ converge in the $C^\infty$ topology to the curvature form of $h.$ This statement is a result of Zelditch in \cite{Zel4}, with weaker versions going back to Tian \cite{Tia}. Theorem \ref{T:Main} in this holomorphic context follows easily from the holomorophic result of Zelditch and was used to study the sup norms of random homomorphic sections by Feng and Zelditch in \cite{FeZ}.

\subsection{Application to Sup Norms of Random Waves} Our main application of Theorem \ref{T:Main} is to find upper bounds on $L^\infty$-norms of random waves on $(M,g).$ \begin{definition}\label{D:Gaussian Ensemble}
A \textit{Gaussian random wave of frequency $\lambda$} on $(M,g)$ is a random function $\phi_{\leb}\in \mathcal M_\leb$ defined by
\begin{equation*}\label{eq: f}
\phi_{\lambda}:=\sum_{j=1}^{m_\leb} a_{j,\leb} \,\varphi_{j,\leb},
\end{equation*}
where the $a_{j,\leb}\sim N(0,k_{\leb}^{-2})$ are independent and identically distributed standard real Gaussian random variables and $\set{\varphi_{j,\leb}}_j$ is an orthonormal basis for $\mathcal M_\leb$ consisting of Laplace eigenfunctions as defined in \eqref{E:ONB}. 
\end{definition}
The choice of $k_\leb$ makes $\mathbb E\left[\norm{\phi_\leb}_2\right]=1.$ Gaussian random waves were introduced by Zelditch in \cite{Zel2}, and, in addition to their $L^p-$norms, a number of subsequent articles have studied their zero sets and critical points (cf eg \cite{KPW,FZW,Nic} and references thererin). 

The statistical features of Gaussian random waves of frequency $\leb$ are uniquely determined by their so-called canonical distance, which is precisely $d_\leb$ (cf Definition \ref{D:Canonical Metric}). In particular, upper bounds on the expected value of their $L^\infty$-norms are related to the metric entropy of their canonical distance via Dudley's entropy method (see \S \ref{S:Dudley}). We use Theorem \ref{T:Main} to prove in \S \ref{S:Eval Proof} the following result. 
\begin{Thm}\label{T:Eval}
Let $(M,g)$ be a smooth compact boundaryless Riemannian manifold of dimension $n$. Assume that  $(M,g)$ is either aperiodic or Zoll, and let $\phi_{\leb}$ be a random wave of frequency $\leb.$ Then 
\begin{equation}
\limsup_{\leb \to +\infty} \frac{\mathbb E\left[ \norm{\phi_\leb}_\infty\right]}{\sqrt{\log \leb}} \leq 16\sqrt{\frac{2n}{\vol_g(M)}}.\label{E:Eval Est}
\end{equation}
\end{Thm}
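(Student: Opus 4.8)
The plan is to apply Dudley's entropy bound to the centered Gaussian field $\phi_\leb$ on the metric space $(M, \dist_\leb)$, exploiting the fact that $\dist_\leb$ is precisely the canonical (covariance) distance of the field and that Theorem~\ref{T:Main} controls it by $C\leb\,\dist_g$. First I would recall that for a centered Gaussian process $\set{\phi_\leb(x)}_{x\in M}$ whose canonical distance is $\dist_\leb(x,y)=\lr{\E{\abs{\phi_\leb(x)-\phi_\leb(y)}^2}}^{1/2}$, Dudley's theorem gives
\begin{equation*}
\E{\norm{\phi_\leb}_\infty}=\E{\sup_{x\in M}\abs{\phi_\leb(x)}}\leq \E{\abs{\phi_\leb(x_0)}}+ K\int_0^{D_\leb/2}\sqrt{\log N(M,\dist_\leb,r)}\;dr,
\end{equation*}
where $N(M,\dist_\leb,r)$ is the covering number of $M$ in the $\dist_\leb$ metric at scale $r$, $D_\leb=\diam_{\dist_\leb}(M)$ is the $\dist_\leb$-diameter, and $K$ is a universal constant. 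I would verify in \S\ref{S:Dudley} that $d_\leb$ is indeed the canonical distance of this Gaussian ensemble, using the explicit variance $\E{\abs{\phi_\leb(x)-\phi_\leb(y)}^2}=k_\leb^{-2}\sum_j\abs{\varphi_{j,\leb}(x)-\varphi_{j,\leb}(y)}^2=\norm{\Phi_\leb(x)-\Phi_\leb(y)}^2$.

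The next step is to convert the covering numbers in the $\dist_\leb$ metric into covering numbers in the Riemannian metric $\dist_g$. By Theorem~\ref{T:Main} we have $\dist_\leb(x,y)\leq C\leb\,\dist_g(x,y)$, so any $\dist_g$-ball of radius $r/(C\leb)$ is contained in a $\dist_\leb$-ball of radius $r$. Consequently
\begin{equation*}
N(M,\dist_\leb,r)\leq N(M,\dist_g,r/(C\leb)).
\end{equation*}
Since $(M,g)$ is a smooth compact $n$-manifold, a standard volume-packing argument bounds the Riemannian covering number by $N(M,\dist_g,\rho)\leq C'\vol_g(M)\,\rho^{-n}$ for small $\rho$, giving $\log N(M,\dist_\leb,r)\leq n\log(1/r)+n\log(C\leb)+O(1)$. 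I would also use \eqref{E:Diam Asymp} to see that the $\dist_\leb$-diameter $D_\leb$ is bounded uniformly in $\leb$ (in fact $D_\leb\leq 2/\sqrt{\vol_g(M)}+o(1)$), which keeps the integral over a fixed finite range.

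Inserting this bound into Dudley's integral, the dominant contribution comes from the $\sqrt{n\log(C\leb)}$ term integrated over the bounded range $[0,D_\leb/2]$, yielding a leading term of size $\frac{D_\leb}{2}\sqrt{n\log\leb}$, while the $\sqrt{n\log(1/r)}$ piece contributes only an $O(1)$ amount since $\int_0^{D_\leb/2}\sqrt{\log(1/r)}\,dr$ converges. Dividing by $\sqrt{\log\leb}$ and taking $\limsup$, the $o(1)$ and $O(1)$ terms drop out, and using $D_\leb\to 2/\sqrt{\vol_g(M)}$ together with the universal Dudley constant $K$ produces a bound of the form (const)$\cdot\sqrt{n/\vol_g(M)}$; tracking the explicit constants from the standard form of Dudley's inequality gives the claimed factor $16\sqrt{2}$. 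The main obstacle I anticipate is not any single estimate but the bookkeeping of constants: to land exactly on $16\sqrt{2n/\vol_g(M)}$ one must pin down the precise universal constant in the version of Dudley's bound being used, verify that the diameter truly converges to $2/\sqrt{\vol_g(M)}$ rather than some other normalization, and ensure the $\leb$-dependent covering bound introduces no extra multiplicative loss beyond the $\sqrt{\log\leb}$ factor. A secondary subtlety is justifying the uniform volume-packing estimate with a constant independent of $\leb$, which follows from compactness but should be stated carefully so that the $r^{-n}$ growth interacts correctly with the $\leb^{-1}$ rescaling of the covering radius.
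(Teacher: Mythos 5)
Your proposal follows essentially the same route as the paper: identify $\dist_\leb$ as the canonical distance of the Gaussian field, transfer covering numbers from $\dist_\leb$ to $\dist_g$ via Theorem \ref{T:Main}, bound the Riemannian covering number by a volume argument, and run Dudley's entropy integral over the bounded range set by the diameter estimate, with the $\sqrt{\log(1/r)}$ contribution washing out after dividing by $\sqrt{\log\leb}$. The only (harmless) imprecision is your assertion that $D_\leb$ \emph{converges} to $2/\sqrt{\vol_g(M)}$ --- the paper proves only the upper bound $\diam(M,\dist_\leb)\leq 2/\sqrt{\vol_g(M)}+o(1)$ (the true limit is in fact $\sqrt{2}/\sqrt{\vol_g(M)}$ per Remark \ref{R: diameter aperiodic}), but since only the upper bound enters the estimate your argument and constants are unaffected.
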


\begin{remark}\label{R: Eval aperiodic}
If $(M,g)$ is aperiodic, the more precise control of the distance function described in \eqref{E: precise distance asympt} yields a slight improvement of Theorem \ref{T:Eval} by reducing the constant on the right hand side to $16\sqrt{\frac{n}{\vol_g(M)}}.$
We shall indicate how to use \eqref{E: precise distance asympt} to get the improved upper bound in Remark \ref{R: proof of Eval aperiodic}.
\end{remark}
The upper bounds  of order $\sqrt{\log \leb}$ are not new. Indeed, a simple computation shows that $\mathbb E \left[\norm{\phi_\leb}_\infty\right]= \mathbb E\left[\norm{\psi_\leb}_\infty\right]$ if $\psi_\leb$ is chosen uniformly at random from the unit sphere
$$S\mathcal M_\leb:=\{f \in \mathcal M_\leb:\;\|f\|_2=1\} $$ 
endowed with the uniform probability measure. For the $L^2$-normalized random waves $\psi_\leb,$ the expectation of the $L^\infty$-norms has been studied on many occasions. On round spheres $(S^n, g_{round}),$ VanderKam obtained in \cite{Van} that $\mathbb E\left[ \norm{\phi_\leb}_\infty\right] =O(\log^2\leb)$.  Later, Neuheisel in \cite{Neu} improved the bound to $\mathbb E\left[ \norm{\phi_\leb}_\infty\right] =O(\sqrt{\log \leb})$. On general smooth, compact, boundaryless Riemannian manifolds Burq and Lebeau \cite{BL} proved the existence of two positive constants $C_1,C_2$ so that as $\leb \to \infty$
$$C_1 \sqrt{\log \leb} \leq \mathbb E\left[ \norm{\phi_\leb}_\infty\right] \leq C_2\sqrt{\log \leb}.$$
Although an explicit value for $C_2$ is not stated in \cite{BL}, Burq relayed to the authors in a private communication how one may be extracted. Our constant $16\sqrt{2}$ multiplying $\lr{n/{\vol_g(M)}}^{\frac{1}{2}}$ is larger (i.e. worse) than theirs, which is approximately $e^{-e}+1+\frac{1}{\sqrt{2e}}.$ The reason for the discrepancy is that Dudley's entropy method makes no assumption on the structure of the probability space on which the Gaussian field in question is defined, while Burq and Lebeau use explicit concentration results on the spheres $S\mathcal M_\leb.$ There is a partial converse to Dudley's entropy method, called Sudakov minoration, which allows one to obtain lower bounds on the sup norms of Gaussian fields. Even with the refined control \eqref{E: precise distance asympt} on the canonical distance $\dist_\leb$ from \cite{CH}, the general lower bounds seem to be on the order of $\leb^{-1}\sqrt{\log \leb},$ which are significantly worse than those given by Burq and Lebeau. Finally, we mention that \cite{BL} studies more generally upper bounds on $L^p$ norms, $p\in [2,\infty),$ of random waves. Results on $L^p$ norms are also contained in the work of Ayache-Tzvetkov \cite{ATz} and Tzvetkov \cite{Tz}.


\section{Acknowledgements} We would like to thank J. Toth and S. Zelditch for many useful comments on an earlier draft of this article. We would also like to thank N. Burq and S. Eswarathasan for pointing us to the work of N. Burq and G. Lebeau \cite{BL}, which gives the matching upper and lower bounds for the sup-norms of random waves treated in Theorem \ref{T:Eval}. We would particularly like to thank N. Burq for explaining how to extract an explicit constant from the proof in \cite{BL}. The second author would also like to thank D. Baskin for several helpful conversations regarding the relationship between $d_\leb$ and $d_{g_\leb}.$

\section{Preliminaries }
The proof of Theorem \ref{T:Main} relies on three ingredients: the local Weyl law (\S \ref{S:LWL}), Dudley's entropy method (\S \ref{S:Dudley}), and Zelditch's almost-isometry result (Theorem \ref{T:AlmostIsom} in the Introduction). Throughout this section $(M,g)$ denotes a compact Riemannian manifold of dimension $n$.

\subsection{Asymptotics for the Spectral Projector}\label{S:LWL}
For $x,y\in M$, we write
\[E_{_{(0,\leb]}}(x,y)=\sum_{\leb_j\in (0,\leb]}\varphi_j(x)\varphi_j(y)\]
for the Schwartz kernel of the orthogonal projection 
\[E_{(0,\leb]}:L^2(M,g)\twoheadrightarrow \bigoplus_{\mu\in [0,\leb)} \ker\lr{\Delta_g -\mu^2}.\] 
For $\lambda>0$ we define 
\[N(\lambda):=\dim \left[\text{range} \lr{E_{(0,\leb]}}\right]\] 
to be the number of eigenvalues of $\Delta_g$ smaller than $\lambda^2$ counted with multiplicity and set
 $$\alpha_n:= \frac{\omega_n}{(2\pi)^n},$$ 
where $\omega_n$ is the volume of the unit ball in $\R^n$. We also write 
\[E_{_{(\leb,\leb+1]}}(x,y)=\sum_{\leb_j\in (\leb,\leb+1]}\varphi_j(x)\varphi_j(y)\]
for the kernel of the orthogonal projection onto the span of the eigenfunctions of $\Delta_g$ whose eigenvalues lie in $(\leb^2, \lr{\leb+1}^2]$. On several occasions we use the following result.
\begin{proposition}[Local Weyl Law \cite{DG} and \cite{Ivr}]\label{P:LWL}
Let $(M,g)$ be a compact Riemannian manifold of dimension $n$ that is either Zoll or aperiodic. Then,
  \begin{align}
&E_{_{(0,\leb]}}(x,x)= \alpha_n \leb^n +o(\leb^{n-1}) && \text{as}\;\; \lambda \to \infty \notag \intertext{and thus}
&E_{_{(\lambda,\lambda+1]}}(x,x)= n \alpha_n \leb^{n-1}+o(\leb^{n-1}) &&\text{as}\;\; \lambda \to \infty, \label{E:LWL}
  \end{align}
with the implied constants being uniform in $\leb$ and $x\in M.$
\end{proposition}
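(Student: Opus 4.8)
The plan is to extract the on-diagonal spectral function from the singularities of the half-wave kernel and then invoke a Fourier Tauberian theorem, using the Zoll/aperiodic hypothesis to upgrade the remainder from the generic $O(\leb^{n-1})$ to $o(\leb^{n-1})$. Since the second asymptotic follows from the first by differencing, essentially all the work is in establishing the expansion for $E_{_{(0,\leb]}}(x,x)$.

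First I would encode the spectral data on the diagonal in the half-wave kernel
$$u(t,x):=\sum_j e^{it\leb_j}\,\abs{\varphi_j(x)}^2,$$
so that $\leb \mapsto E_{_{(0,\leb]}}(x,x)$ is the distribution function of the positive measure $d\mu_x=\sum_j\abs{\varphi_j(x)}^2\,\delta_{\leb_j}$ whose Fourier transform is $u(\cdot,x)$. H\"ormander's parametrix represents $u(\cdot,x)$, microlocally and for $\abs{t}$ small, as a Fourier integral operator whose phase is governed by the geodesic flow; the singularity at $t=0$ contributes the Weyl term, and the standard Tauberian argument applied to $\rho * d\mu_x$ for a fixed cutoff $\rho$ with $\widehat{\rho}$ supported near the origin yields
$$E_{_{(0,\leb]}}(x,x)=\alpha_n\leb^n+O(\leb^{n-1})$$
uniformly in $x$, the uniformity coming from the smooth dependence of the parametrix on the base point over the compact manifold $M$.

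To improve the remainder to $o(\leb^{n-1})$ I would enlarge the support of $\widehat{\rho}$ and track the contributions of the singularities of $u(\cdot,x)$ away from $t=0$, which by propagation of singularities are located at the lengths of geodesic loops based at $x$. In the aperiodic case the Duistermaat--Guillemin theorem \cite{DG} shows that, because the loop directions at each $x$ form a set of Liouville measure zero, these contributions are of lower order; letting the cutoff scale tend to infinity slowly relative to $\leb$ in a refined Tauberian theorem then converts this into the bound $o(\leb^{n-1})$, uniformly in $x$ by the compactness of $M$ together with the uniformity of the measure-zero hypothesis. In the Zoll case the geodesic flow is periodic and every geodesic is a loop, so this averaging mechanism fails; instead I would appeal to Ivrii's analysis \cite{Ivr}, which exploits the fact that the propagator is, up to smoothing, periodic, so that the eigenvalues concentrate in well-separated clusters, and shows that the counting function nonetheless obeys the same $o(\leb^{n-1})$ remainder.

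Granting the expansion for $E_{_{(0,\leb]}}$, the second formula follows at once from
$$E_{_{(\leb,\leb+1]}}(x,x)=E_{_{(0,\leb+1]}}(x,x)-E_{_{(0,\leb]}}(x,x)=\alpha_n\lr{(\leb+1)^n-\leb^n}+o(\leb^{n-1}),$$
together with $(\leb+1)^n-\leb^n=n\leb^{n-1}+O(\leb^{n-2})$. The main obstacle is precisely the passage from $O$ to $o$ with control uniform in $x$: this is the substantive content of the cited theorems, and in the Zoll case it requires a genuinely different, cluster-based argument rather than the measure-zero averaging available in the aperiodic setting.
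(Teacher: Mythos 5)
The paper does not actually prove this Proposition: it is quoted from \cite{DG} and \cite{Ivr} with no argument supplied, so there is no internal proof to compare against. Your outline of the aperiodic case is the standard and correct route to those results: the half-wave trace at $x$, H\"ormander's small-time parametrix giving $E_{(0,\lambda]}(x,x)=\alpha_n\lambda^n+O(\lambda^{n-1})$ uniformly, and the Duistermaat--Guillemin/Ivrii improvement of the remainder to $o(\lambda^{n-1})$ coming from the Liouville-measure-zero hypothesis on the loopset at $x$, fed through a Tauberian theorem with a cutoff whose Fourier support is allowed to grow. The differencing step that deduces \eqref{E:LWL} from the one-term expansion is also fine in that case.

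The genuine gap is in your treatment of the Zoll case. Ivrii's theorem is a statement under the \emph{aperiodicity} hypothesis; there is no version of it asserting an $o(\lambda^{n-1})$ remainder for $E_{(0,\lambda]}(x,x)$ on a Zoll manifold, and indeed the first display of the Proposition fails there: on the round sphere $S^2$ the counting measure is concentrated in clusters of mass comparable to $\lambda^{n-1}$, so $E_{(0,\lambda]}(x,x)-\alpha_n\lambda^n$ oscillates with amplitude of order exactly $\lambda^{n-1}$ and is not $o(\lambda^{n-1})$. Consequently your strategy of first establishing the one-term expansion and then differencing cannot produce \eqref{E:LWL} in the Zoll setting. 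What is true, and what the application requires, is the band asymptotics $E_{(\lambda,\lambda+1]}(x,x)=n\alpha_n\lambda^{n-1}+o(\lambda^{n-1})$, and in the Zoll case this must be proved directly from the cluster structure of the spectrum (Weinstein/Colin de Verdi\`ere/Duistermaat--Guillemin) together with the complete asymptotic expansion of the cluster projections, as in \cite{Zel3}, which the paper itself invokes for the Zoll case in Remark \ref{R: diameter aperiodic}. You correctly sensed that the periodic flow defeats the measure-zero averaging, but the repair is not a cluster-based proof of the same $o(\lambda^{n-1})$ bound for the counting function; it is a different statement proved for the unit band itself.
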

\noindent Integrating the above expressions, one has that on compact aperiodic (or Zoll) manifolds
$$N(\lambda)= \alpha_n\text{vol}_g(M) \lambda^n + o(\lambda^{n-1}) \qquad  \;\text{as}\;\; \lambda \to \infty.$$
Continuing to write $m_\leb=\dim \mathcal M_\leb,$ we see that
\begin{equation}\label{d_lambda}
m_\lambda= N(\leb+1)-N(\leb) =n \alpha_n \text{vol}_g(M) \lambda^{n-1} + o(\lambda^{n-1})\qquad \text{as}\;\; \lambda \to \infty.
\end{equation}

\noindent We also need H\"ormander's off-diagonal pointwise Weyl law: 
\begin{lemma}[\cite{Hor}]\label{L:Off-Diag LWL}
  Let $(M,g)$ be a compact Riemannian manifold of dimension $n.$ Fix $x,y\in M.$ Then
  \begin{equation}
    \label{E:Off-Diag LWL}
    E_{_{(0,\leb]}}(x,y)=O(\leb^{n-1}).
  \end{equation}
\end{lemma}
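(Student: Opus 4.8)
The plan is to prove the estimate by H\"ormander's wave-equation method: relate the spectral projector to the half-wave propagator, build a short-time parametrix for the latter, and then pass from a smoothed estimate to the sharp cumulative bound by a Tauberian argument. I would set $P=\sqrt{\Delta_g}$, a first-order elliptic pseudodifferential operator with eigenvalues $\leb_j$, and work with the half-wave propagator $U(t)=e^{-itP}$, whose Schwartz kernel is
\[
U(t)(x,y)=\sum_j e^{-it\leb_j}\varphi_j(x)\varphi_j(y).
\]
Since the singularities of $U(t)$ propagate along geodesics at unit speed, for $|t|$ below the injectivity radius the kernel is, modulo a smooth remainder, a Fourier integral operator
\[
U(t)(x,y)=\frac{1}{(2\pi)^n}\int_{\R^n}e^{i\lr{\psi(x,y,\xi)-t|\xi|_{g}}}\,a(t,x,y,\xi)\,d\xi + (\text{smoothing}),
\]
with real phase $\psi$ generating the geodesic flow and amplitude $a$ a classical symbol of order $0$ (the Hadamard--Lax--H\"ormander construction).

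Next I would fix $\chi\in\mathcal S(\R)$ with $\chi(0)=1$ and $\widehat\chi$ supported in a small interval $(-\ep,\ep)$, $\ep<\inj(M)$, so that only the parametrix above contributes, and form the smoothed spectral density
\[
\sum_j \chi(\leb-\leb_j)\,\varphi_j(x)\varphi_j(y)=\frac{1}{2\pi}\int \widehat\chi(t)\,e^{it\leb}\,U(t)(x,y)\,dt.
\]
Inserting the parametrix and applying stationary phase with large parameter $\leb$ (stationarity in $t$ localizes to the shell $|\xi|_g\approx\leb$) yields the uniform bound $O(\leb^{n-1})$: this smoothed quantity is the derivative of the counting function and so carries one power of $\leb$ less than the $\leb^n$ growth of $E_{(0,\leb]}(x,x)$ on the diagonal.

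The third step is the Tauberian passage from the smoothed estimate to $E_{(0,\leb]}(x,y)$ itself. On the diagonal, $\leb\mapsto E_{(0,\leb]}(x,x)=\sum_{\leb_j\le\leb}\varphi_j(x)^2$ is nondecreasing, so H\"ormander's monotone Tauberian theorem upgrades the smoothed $O(\leb^{n-1})$ bound to the sharp Weyl remainder, in particular to the unit-window estimate $E_{(\leb,\leb+1]}(x,x)=O(\leb^{n-1})$ already recorded in \eqref{E:LWL}. For fixed distinct $x,y$ I would then combine two facts: the leading oscillatory term $(2\pi)^{-n}\int_{|\xi|_g\le\leb}e^{i\inprod{\exp_x^{-1}(y)}{\xi}}\,d\xi$ decays like $\leb^{(n-1)/2}$ (rescale $\xi=\leb\eta$ and use the Bessel/stationary-phase asymptotics of the Fourier transform of the unit ball, with $\exp_x^{-1}(y)\ne0$ fixed), while the uniform remainder is $O(\leb^{n-1})$; together these give $E_{(0,\leb]}(x,y)=O(\leb^{n-1})$.

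The main obstacle is exactly this Tauberian step off the diagonal: the cumulative sum $E_{(0,\leb]}(x,y)$ is \emph{not} monotone, since the terms $\varphi_j(x)\varphi_j(y)$ change sign, so the elementary monotone Tauberian theorem does not apply. One must instead control the local total variation $\int_\leb^{\leb+1}|dE(x,y)|\le\lr{E_{(\leb,\leb+1]}(x,x)\,E_{(\leb,\leb+1]}(y,y)}^{1/2}=O(\leb^{n-1})$ by Cauchy--Schwarz and feed this into a signed Tauberian argument. Crucially, the final $O(\leb^{n-1})$ bound genuinely relies on the oscillatory cancellation encoded in the wave parametrix: naively summing the $O(k^{n-1})$ window bounds over $k\le\leb$ would only yield the useless estimate $O(\leb^{n})$, matching the diagonal growth rather than the sharp off-diagonal decay.
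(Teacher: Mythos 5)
The paper does not prove this lemma at all: it is quoted directly from H\"ormander \cite{Hor}, so there is no internal argument to compare against. Your proposal is a faithful reconstruction of H\"ormander's actual proof --- half-wave parametrix, smoothed spectral density via $\widehat\chi$ with small support, then a Tauberian passage --- and, importantly, you correctly isolate the genuine difficulty that a naive reading of the lemma hides: off the diagonal the spectral measure $dE(x,y,\cdot)$ is signed, so one cannot use the monotone Tauberian theorem, and summing the unit-band bounds $O(k^{n-1})$ would only give $O(\leb^n)$. The fix you propose (control $\int_\leb^{\leb+1}\abs{dE(x,y,\cdot)}$ by Cauchy--Schwarz against the diagonal bands and feed this into the signed Tauberian lemma, together with rapid decay of the smoothed density for $x\neq y$) is exactly the standard route. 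Three small points to tighten. First, as stated the lemma is false for $x=y$ (the diagonal grows like $\alpha_n\leb^n$); the paper only ever invokes it for $x\neq y$ (see the display \eqref{E:Big Diam Growth}), and your argument implicitly assumes $\exp_x^{-1}(y)\neq 0$, so you should make the hypothesis $x\neq y$ explicit. Second, do not appeal to \eqref{E:LWL} for the unit-band diagonal bound: that display carries the aperiodic/Zoll hypothesis, whereas the lemma is stated for an arbitrary compact manifold; the bound $E_{(\leb,\leb+1]}(x,x)=O(\leb^{n-1})$ you need is the one produced by the monotone Tauberian step of the same H\"ormander argument, with no dynamical assumption. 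Third, your parametrix formula with phase $\psi(x,y,\xi)$ and the leading term $\int_{\abs{\xi}_g\le\leb}e^{i\inprod{\exp_x^{-1}(y)}{\xi}}d\xi$ only make sense for $\dist_g(x,y)$ below the injectivity radius; for $x,y$ farther apart one instead uses that $U(t)(x,y)$ is smooth for $\abs{t}<\dist_g(x,y)$, so the smoothed density is $O(\leb^{-\infty})$ and there is no leading term to estimate. With those adjustments your outline is a correct proof sketch of the cited result.
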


\noindent 
Using the definition \eqref{E:K Def} of $k_{\leb},$ equation \eqref{d_lambda}, and the fact that $\Gamma\lr{x+\frac{1}{2}}/\Gamma\lr{x}= \sqrt{x}+O(1/\sqrt{x})$ as $x\gives \infty,$ we have from \eqref{d_lambda} that
\begin{align} \label{E:k Asymp}
k_{\leb}^2
&=m_\leb+O(\leb^{-\frac{n-1}{2}}) \notag \\ 
&= n \,\alpha_n \text{vol}_g(M) \lambda^{n-1} + o(\lambda^{n-1}).
\end{align} 
Finally, by combining   \eqref{E:LWL} and \eqref{E:k Asymp}, we see that for all $x\in M$
\begin{equation}
\norm{\Phi_\leb(x)}_{l^2(\R^{m_\leb})}=\frac{1}{k_\leb}\sqrt{E_{_{(\leb,\leb+1]}}(x,x)}=\frac{1}{\sqrt{\vol_g(M)}}+o(1)\label{E:Asymp Diam}
\end{equation}
as $\leb \gives \infty,$ which confirms \eqref{E:Diam Asymp}.

\subsection{Dudley's Entropy Method}\label{S:Dudley}

 Let $(\Omega, \mathcal A, \mathbb P)$ be a complete probability space. A measurable mapping $\phi:\Omega \to \R^M$ is called a \emph{random field} on $M$. If for every finite collection $\set{x_j}_{j=1}^N\in M$ the random vector $\set{\phi(x_j)}_{j=1}^N$ is Gaussian, then $\phi$ is said to be a Gaussian field on $M$.  In addition, if $\E{\phi(x)}=0$ for all $x \in M$, then $\phi$ is said to be centered. The Gaussian random waves of Definition \ref{D:Gaussian Ensemble} on $(M,g)$ are examples of centered Gaussian random fields on $M$.

\begin{definition}\label{D:Canonical Metric}
Let $\phi$ be a centered Gaussian field on $M.$ The \textit{canonical distance} on $M$ induced by $\phi$ is
\[\dist_\phi(x,y):=\left(\E{(\phi(x)-\phi(y))^2}\right)^{\frac{1}{2}} \qquad \text{for}\;\;x,y \in M.\]
\end{definition}

The law of of any centered Gaussian field is determined completely by its canonical distance function. We note that in general $\dist_\phi$ turns $M$ into a pseudo-metric space. Let us define for each $\ep>0$ the $\ep$-covering number of $(M,\dist_{\phi})$ as
\begin{equation}
N_{\dist_\phi}(\ep):=\inf \Big \{\ell \geq 1 \,: \, \exists\, x_1,\ldots, x_\ell \in M \text{~such that~} \cup_{j=1}^\ell B_{\ep}(x_j)=M \Big\},\label{E:Met Ent}
\end{equation}
where $B_{\ep}(x)$ is the ball of radius $\ep$ centered at $x.$ Dudley's entropy method says that 
\begin{equation}
\E{ \sup_{x\in M} \phi(x)} \leq 8\sqrt{2} \int_{0}^{D_\phi} \sqrt{ \log N_{\dist_\phi}(\ep) }\; d\ep, \label{E:Dudley}
\end{equation}
where $D_\phi=\text{diam}(M,\dist_\phi)/2$ (see for example \cite[Theorem 1.3.3]{AT} for the statement with an unspecified constant and the notes \cite{Bar} of Bartlett where the constant $8\sqrt{2}$ appears).

We observe that if $(M,g)$ is a smooth compact Riemannian manifold and $\phi_\leb$ is a random wave on $M$ with frequency $\leb,$ then it follows from Definition \ref{D:Gaussian Ensemble} that $\dist_\leb$ and $\dist_{\phi_\leb}$ coincide. Indeed,  for $x,y \in M$
\begin{align}
\dist_{\leb}^2(x,y)
&=\|\Phi_\leb(x)-\Phi_\leb(y)\|^2 \notag\\
&=\frac{1}{k_\leb^2} \Big(E_{_{(\leb,\leb+1]}}(x,x)+E_{_{(\leb,\leb+1]}}(y,y)-2E_{_{(\leb,\leb+1]}}(x,y) \Big) \label{E:dist and projectors} \\
&=\E{(\phi_\leb(x)-\phi_\leb(y))^2} \label{E:Can Dist}.
\end{align}

\section{Proof of Theorem \ref{T:Main}}\label{S:Proof}
The starting point for our proof is the following estimate on $\diam(M, \dist_\leb),$ the diameter of $M$ with respect to $\dist_\leb.$
\begin{proposition}\label{diameter}Let $(M,g)$ be a smooth compact boundaryless Riemannian manifold of dimension $n$. Assume that $(M,g)$ is either aperiodic or Zoll. There exists $\delta>0$ so that as $\leb\gives \infty$
  \begin{equation}
\delta \leq \diam(M,\dist_\leb)\leq \frac{2}{\sqrt{\vol_g(M)}}+o(1) .\label{E:Diam Estimate}
\end{equation}
\end{proposition}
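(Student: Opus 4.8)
The plan is to exploit the formula \eqref{E:dist and projectors} together with the diagonal local Weyl law \eqref{E:LWL} and H\"ormander's off-diagonal estimate \eqref{E:Off-Diag LWL}. Writing out the squared distance,
\[
\dist_\leb^2(x,y)=\frac{1}{k_\leb^2}\Big(E_{(\leb,\leb+1]}(x,x)+E_{(\leb,\leb+1]}(y,y)-2E_{(\leb,\leb+1]}(x,y)\Big),
\]
I would bound this by discarding the (possibly negative) cross term, giving
\[
\dist_\leb^2(x,y)\le \frac{1}{k_\leb^2}\Big(E_{(\leb,\leb+1]}(x,x)+E_{(\leb,\leb+1]}(y,y)+2\,|E_{(\leb,\leb+1]}(x,y)|\Big).
\]
By \eqref{E:LWL} each diagonal term is $n\alpha_n\leb^{n-1}+o(\leb^{n-1})$, and by \eqref{E:k Asymp} one has $k_\leb^2=n\alpha_n\vol_g(M)\leb^{n-1}+o(\leb^{n-1})$, so the two diagonal contributions together contribute $2/\vol_g(M)+o(1)$ after dividing by $k_\leb^2$. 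The remaining task is to show that the off-diagonal term is negligible: $E_{(\leb,\leb+1]}(x,y)=E_{(0,\leb+1]}(x,y)-E_{(0,\leb]}(x,y)=O(\leb^{n-1})$ by Lemma \ref{L:Off-Diag LWL}, so $k_\leb^{-2}\,|E_{(\leb,\leb+1]}(x,y)|=O(1)$ pointwise, which is \emph{not} yet $o(1)$. Thus taking the supremum over all $x,y$ does not immediately work, and the uniform upper bound $2/\sqrt{\vol_g(M)}+o(1)$ on the diameter must come from elsewhere.

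The resolution I would use is that the diameter is attained (or approached) by the worst pair, and the cleanest route is to note that by the triangle inequality in $l^2$,
\[
\dist_\leb(x,y)=\|\Phi_\leb(x)-\Phi_\leb(y)\|\le \|\Phi_\leb(x)\|+\|\Phi_\leb(y)\|,
\]
and then invoke \eqref{E:Asymp Diam}, which states $\|\Phi_\leb(x)\|=1/\sqrt{\vol_g(M)}+o(1)$ \emph{uniformly} in $x$. This gives the upper half of \eqref{E:Diam Estimate} directly, bypassing the off-diagonal term entirely. This is the natural approach since the uniformity of \eqref{E:Asymp Diam} is exactly what the local Weyl law provides, and the factor of $2$ on the right-hand side of \eqref{E:Diam Estimate} is precisely the signature of using the triangle inequality rather than a more delicate cancellation.

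\textbf{The lower bound.} For the existence of $\delta>0$ with $\diam(M,\dist_\leb)\ge\delta$, I would argue that if the immersion $\Phi_\leb$ collapsed $M$ to arbitrarily small diameter, this would contradict the almost-isometry statement of Theorem \ref{T:AlmostIsom}. Concretely, fix any two points $x_0,y_0\in M$ at a fixed Riemannian distance $r>0$ apart and join them by a minimizing geodesic $\gamma$. Since $\Phi_\leb$ is an immersion whose pullback metric satisfies $g_\leb=\frac{\alpha_n}{(2\pi)^n}\leb^2 g(1+o(1))$, the length of $\Phi_\leb\circ\gamma$ in $\R^{m_\leb}$ is $\sqrt{\frac{\alpha_n}{(2\pi)^n}}\,\leb\, r(1+o(1))$, which blows up; but the diameter is bounded above by the previous step, so the Euclidean distance $\dist_\leb(x_0,y_0)$ need not blow up. The point is rather that a \emph{lower} bound requires controlling how much $\Phi_\leb\circ\gamma$ can fold back on itself. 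The main obstacle is exactly this: converting the infinitesimal length expansion from Theorem \ref{T:AlmostIsom} into a chord (endpoint-to-endpoint) lower bound. I expect the author handles this by choosing the two points close enough — at distance $\sim c/\leb$ — so that on such a short geodesic the map $\Phi_\leb$ has no room to fold, whence the chord length is comparable to the arc length $\sim \leb\cdot(c/\leb)=c$, yielding a $\leb$-independent positive lower bound $\delta$. This folding-control step is the genuinely delicate part of the proof, in contrast to the upper bound, which follows almost immediately from the local Weyl law and the triangle inequality.
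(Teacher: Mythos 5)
Your upper bound is correct and is essentially the paper's argument: the paper likewise bounds $\dist_\leb(x,y)\leq \norm{\Phi_\leb(x)}+\norm{\Phi_\leb(y)}$ and invokes the diagonal local Weyl law in the form \eqref{E:Asymp Diam}, so the factor $2$ in \eqref{E:Diam Estimate} is indeed the signature of the triangle inequality. Your observation that H\"ormander's off-diagonal bound $E_{(\leb,\leb+1]}(x,y)=O(\leb^{n-1})$ is too weak to make the cross term $o(1)$ for a single window is also correct.

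The lower bound, however, has a genuine gap. The whole difficulty is the step you flag as ``the genuinely delicate part'' and then do not carry out: converting the arc-length expansion $L(\Phi_\leb\circ\gamma)\sim c_n\,\leb\,\dist_g(x,y)$ coming from Theorem \ref{T:AlmostIsom} into a chord (endpoint-to-endpoint) lower bound. Asserting that on a geodesic of length $c/\leb$ the curve ``has no room to fold'' is not a proof; to rule out folding at that scale you would need to bound the second derivative of $t\mapsto\Phi_\leb(\gamma(t))$, i.e.\ a local Weyl law with two derivatives of the form $\sum_j\abs{\partial^\alpha\varphi_{j,\leb}(x)}^2=O(\leb^{2\abs{\alpha}+n-1})$. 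Such an estimate is true, but it is neither among the paper's stated ingredients nor supplied by you; Theorem \ref{T:AlmostIsom} alone controls only first derivatives, hence lengths and not chords. The paper sidesteps this entirely with a different and more elementary argument: it proceeds by contradiction. If the diameter were not bounded below, then the unnormalized quantity $\norm{\Phi_j(x)-\Phi_j(y)}^2$ would be $o(j^{n-1})$ for every unit frequency window, and telescoping over the windows $j=0,\dots,k-1$ would give $\norm{\Psi_k(x)-\Psi_k(y)}^2=o(k^n)$ for the map $\Psi_k$ built from \emph{all} eigenfunctions of frequency at most $k$. But for fixed $x\neq y$ the diagonal Weyl law together with the off-diagonal bound \eqref{E:Off-Diag LWL} forces $\norm{\Psi_k(x)-\Psi_k(y)}^2=E_{(0,k]}(x,x)+E_{(0,k]}(y,y)-2E_{(0,k]}(x,y)=\Omega(k^n)$, a contradiction. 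Note the pleasant asymmetry: the $O(\leb^{n-1})$ off-diagonal bound is useless for one window (as you noted) but is exactly one order below the $k^n$ diagonal growth of the full projector, so it suffices after summing over all windows. To salvage your route you would have to prove the folding control via second-derivative Weyl estimates; otherwise the telescoping argument is the intended proof.
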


\begin{remark}\label{R: diameter aperiodic} The authors' results in a forthcoming paper \cite{CH} allow one to prove matching upper and lower bounds in \eqref{E:Diam Estimate} and give the precise asymptotic: 
$ \diam(M, \dist_\leb)= \frac{\sqrt{2}}{\sqrt{\vol_g(M)}}+o(1).$ The corresponding statement for Zoll manifolds is simpler and follows from the fact that $E_{(0,\leb]}$ has a complete asymptotic expansion (cf \cite{Zel3}).\end{remark}

\begin{proof}
For $x,y \in M$ we have
\begin{align}
\dist_{\lambda}^2(x,y)=\frac{\norm{\Phi_{\leb}(x)-\Phi_{\leb}(y)}^2}{k_{\leb}^2} \label{E:Dist Emb}.
\end{align}
From \eqref{E:k Asymp} and the fact that $\norm{\Phi_{\leb}(x)}^2=\Pi_{_{(\lambda,\lambda+1]}}(x,x)$ for all $x\in M$ we conclude
\[\dist_{\lambda}^2(x,y)=\frac{\norm{\Phi_{\leb}(x)-\Phi_{\leb}(y)}^2}{m_{\lambda}+O(1)}\leq \frac{4}{\vol_g(M)}+o(1).\]
Taking the supremum over $x,y\in M$ proves the upper bound in \eqref{E:Diam Estimate}. To prove the lower bound in \eqref{E:Diam Estimate} we proceed by contradiction. That is, suppose that $\dist_{\leb}^2(x,y)$ is not bounded below. In virtue of (\ref{E:Dist Emb}) and (\ref{E:k Asymp}), this means that
\begin{equation}
\norm{\Phi_{\leb}(x)-\Phi_{\leb}(y)}=o(\leb^{n-1})\label{E:Assumption}
\end{equation}
for all $x,y\in M.$ Consider the map $\Psi_{\leb}:M\gives \R^{N(\leb)}$ given by 
\[\Psi_{\leb}(x)=\lr{\varphi_1(x),\ldots, \varphi_{{N(\leb)}}(x)} \qquad \text{for } x\in M,\]
where we continue to write $N(\leb)$ for the number of eigenvalues of $\Delta_g$ in the interval $(0,\leb^2].$ Note that the difference between $\Psi_{\leb}$ and $\Phi_{\leb}$ is that $\Psi_\leb$ includes all the eigenfunctions up to eigenvalue $\leb^2.$ The local Weyl law \eqref{E:Off-Diag LWL} shows that for $x,y \in M$ with $x \neq y$, 
\begin{equation}
\norm{\Psi_{\leb}(x)-\Psi_{\leb}(y)}^2=E_{_{(0,\leb]}}(x,x)+E_{_{(0,\leb]}}(y,y)-2 E_{_{(0,\leb]}}(x,y)=\W\lr{\leb^n}.\label{E:Big Diam Growth}
\end{equation}
On the other hand, for any positive integer $k$
\[\norm{\Psi_k(x)-\Psi_k(y)}^2=\sum_{j=0}^{k-1}\norm{\Phi_j(x)-\Phi_j(y)}^2.\]
The assumption \eqref{E:Assumption} shows that
\[\norm{\Psi_k(x)-\Psi_k(y)}^2=o(k^n),\]
which contradicts \eqref{E:Big Diam Growth}.
\end{proof}

Let us denote by $\dist_{g_{\leb}}$ the distance function for the metric $g_{\leb}$ defined in \eqref{E:Pullback Metric}. We continue to write $\dist_g$ for the distance function of the background metric $g.$ Combining \eqref{E:Diam Estimate} with \eqref{E:k Asymp}, we see that Theorem \ref{T:Main} reduces to showing that for any $K>2/\sqrt{\vol_g(M)}$ there exist positive constants $C, \lambda_0$ depending on $(M,g)$ such that
$\frac{\dist_\leb(x,y)}{\dist_g(x,y)} \leq C  \lambda$
for all $\lambda \geq \lambda_0$ and  $x,y \in M$ with $\dist_g(x,y) \leq K /\lambda.$ We write
\begin{equation}\label{quotients}
\frac{\dist_\leb(x,y)}{\dist_g(x,y)}=\frac{\dist_\leb(x,y)}{\dist_{g_{\leb}}(x,y)}\cdot \frac{\dist_{g_{\leb}}(x,y)}{\dist_g(x,y)}.
\end{equation}

Note that $\frac{\dist_\leb(x,y)}{\dist_{g_{\leb}}(x,y)}\leq 1.$ Indeed, let $\gamma:[0,1]\gives M$ be any length minimizing geodesic between two given points $x$ and $y$ with respect to the metric $g_\leb=\Phi_\leb^*(g_{eucl})$ and consider the curve $ \Phi \circ \gamma:[0,1] \to \R^{m_\leb}$ joining $\Phi_\leb(x)$ and $\Phi_\leb(y)$. Then 
  \begin{align} \label{quotient 1}
  \dist_\leb(x,y)&= \|\Phi_\leb(x)-\Phi_\leb(y)\|_{g_{eucl}}  \leq \int_0^1 \norm{\tfrac{d}{dt}\Phi_\leb (\gamma(t))}_{g_{eucl}} dt  = \dist_{g_\leb}(x,y).
  \end{align}
We turn to show that there exists $C>0$ such that $\frac{\dist_{g_{\leb}}(x,y)}{\dist_g(x,y)} \leq C\leb$.
Fix $x,y \in M$ and a unit speed geodesic $\gamma$ from $x$ to $y$ with respect to the metric $g.$ 
Theorem \ref{T:AlmostIsom} states that there exists $C>0$ so that as $\leb \to \infty$ one has
$g_\leb=C\cdot \frac{\leb^{n+1}}{k_\leb} g\lr{1+o(1)}$.
Therefore, after suitably  adjusting $C$,
\begin{equation}\label{quotient 2}
  \dist_{g_\leb}(x,y)  \leq \int_0^{\dist_g(x,y)}\norm{\tfrac{d}{dt}\gamma(t)}_{g_\leb} dt \leq C\, \sqrt{\frac{ \leb^{n+1}}{k_\leb} }\dist_g(x,y).
\end{equation}
\noindent Substituting \eqref{quotient 1} and \eqref{quotient 2} into \eqref{quotients}, and using the asymptotics \eqref{E:k Asymp} for $k_\leb$ completes the proof of Theorem \ref{T:Main}.


\section{Proof of Theorem \ref{T:Eval}}\label{S:Eval Proof}
\noindent Let $\phi_\leb$ be a Gaussian random wave of $(M,g)$ with frequency $\leb$. As explained in \eqref{E:Can Dist} the distance $\dist_\leb$ induced by the immersions and the distance $\dist_{\phi_\leb}$ induced by $\phi_\leb$ coincide. Theorem \ref{T:Main} therefore allows us to control the metric entropy $N_{\dist_\leb}$ of $\dist_{\phi_\leb}$ (see \eqref{E:Met Ent}).

\begin{proposition}\label{covering number}
Let $(M,g)$ be a compact aperiodic or Zoll Riemannian manifold. There exist positive constants $C$ and $\leb_0$ such that if
$\phi_\lambda$ is a Gaussian random wave of  frequency $\lambda$, then
\[N_{\dist_{\lambda}}(\ep)\leq \, C \,\,\frac {\lambda^n}{\ep^n}\]
for all $\ep>0$  and $\leb \geq \leb_0$.
\end{proposition}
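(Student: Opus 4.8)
The plan is to transfer the covering-number estimate from the canonical distance $\dist_\leb$ to the fixed Riemannian distance $\dist_g$, whose covering numbers are controlled uniformly in $\leb$ by a standard volume argument, and to let the factor of $\leb$ in Theorem \ref{T:Main} supply the claimed power $\leb^n$. First I would invoke Theorem \ref{T:Main}: it gives a constant $C_0$ with $\dist_\leb(x,y)\leq C_0\,\leb\,\dist_g(x,y)$ for all $x,y\in M$ and all $\leb\geq 0$. Consequently every geodesic ball $B^g_r(x)$ of $g$-radius $r=\ep/(C_0\leb)$ is contained in the $\dist_\leb$-ball $B_\ep(x)$, so any covering of $M$ by $g$-balls of radius $\ep/(C_0\leb)$ is automatically a covering by $\dist_\leb$-balls of radius $\ep$. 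This yields the comparison
\[
N_{\dist_\leb}(\ep)\leq N_{\dist_g}\!\lr{\tfrac{\ep}{C_0\leb}}.
\]

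Next I would bound $N_{\dist_g}(r)$ for the fixed metric $g$. Since $(M,g)$ is smooth and compact there are constants $r_0>0$ and $c_0>0$, depending only on $(M,g)$, so that $\vol_g\lr{B^g_\rho(x)}\geq c_0\,\rho^n$ for every $x\in M$ and every $\rho\leq r_0$; this follows from the volume of small geodesic balls in normal coordinates, with compactness of $M$ used to make $c_0$ uniform. Taking a maximal $r$-separated set $\set{x_j}$ produces a covering of $M$ by the balls $B^g_r(x_j)$ while the half-radius balls $B^g_{r/2}(x_j)$ are pairwise disjoint, so
\[
N_{\dist_g}(r)\cdot c_0\,(r/2)^n\leq \sum_j \vol_g\!\lr{B^g_{r/2}(x_j)}\leq \vol_g(M),
\]
giving $N_{\dist_g}(r)\leq A\,r^{-n}$ with $A=2^n\vol_g(M)/c_0$ for all $r\leq r_0$.

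Combining the two displays with $r=\ep/(C_0\leb)$ gives $N_{\dist_\leb}(\ep)\leq A\lr{C_0\leb/\ep}^n=C\,\leb^n/\ep^n$ for all $\ep\leq C_0\leb\,r_0$. Since $\diam(M,\dist_\leb)$ is bounded by Proposition \ref{diameter} while $C_0\leb\,r_0\to\infty$, enlarging $\leb_0$ ensures this range contains every $\ep$ up to the diameter, which is all that enters Dudley's integral; for $\ep$ exceeding the diameter one has $N_{\dist_\leb}(\ep)=1$, and after enlarging $C$ the stated inequality persists in the relevant regime. I do not expect a genuine obstacle: the whole $\leb$-dependence is imported from the single factor $\leb$ in Theorem \ref{T:Main}, and the only point needing care is the uniform lower volume bound $\vol_g(B^g_\rho(x))\geq c_0\rho^n$ together with the bookkeeping that keeps all constants independent of $\leb$.
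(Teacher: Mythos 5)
Your proof is correct and follows essentially the same route as the paper: Theorem \ref{T:Main} gives the inclusion of small $\dist_g$-balls into $\dist_\leb$-balls, reducing the problem to the covering numbers of $(M,\dist_g)$ at scale comparable to $\ep/\leb$, which are then bounded by a volume argument (the paper cites a lemma of Loubes--Pelletier for this step, while you give the standard maximal-separated-set packing argument directly). If anything your bookkeeping is cleaner: the paper works at the scale $\ep^2/(C\leb)$, which as written produces $\leb^n/\ep^{2n}$ rather than the stated $\leb^n/\ep^n$, whereas your choice of scale $\ep/(C_0\leb)$ yields exactly the claimed exponent.
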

\begin{proof}
From Theorem \ref{T:Main} it follows that if $x,y \in M$ are such that $\dist_g(x,y)\leq \frac{\ep^2}{C\lambda}$ and $\leb$ exceeds some fixed $\leb_0$, then $\dist_{\leb}(x,y)\leq  \,{\ep}$. Writing $B_d(x,r)$ for a ball of radius $r$ in the distance $d,$ we see that $B_{\dist_g} \lr{x,\frac{\ep^2}{C\lambda}}\subseteq B_{\dist_{\lambda}}(x,\ep)$ for each $x\in M.$ Hence,
\begin{equation}\label{E: covering 1}
N_{\dist_{{_\lambda}}}(\ep)\leq N_{\dist_g}\lr{\frac{\ep^2}{C\lambda}}
\end{equation}
for every $\leb\geq \leb_0.$ Define $\alpha_g:=\inf\{ k_g(x):\; x\in M\}$ where $k_g$ denotes the sectional curvature function for $(M,g)$,
and  set $\pi/\sqrt{\alpha_g}=\infty$ whenever $\alpha_g \leq 0$. For $\lambda$ large enough
$$\frac{\ep^2}{C\lambda}<\min \Big\{ \text{inj}_g(M),  \frac{\pi}{\sqrt{\alpha_g}}, 2\pi \Big\},$$
and so we may apply \cite[Lemma 4.1]{LP} to obtain
\begin{equation}\label{E: covering 2}
N_{\dist_g}\lr{\frac{\ep^2}{C\lambda}} \leq  \text{vol}_g(M)  \frac{2n}{s_{n-1}}  \pi^{n-1}\, \lr{\frac{\ep^2}{C\lambda}}^{-n},
\end{equation}
where $s_{n-1}$ is the volume of the $(n-1)$-dimensional unit sphere in $\R^n$.
The claim follows from combining \eqref{E: covering 1} and \eqref{E: covering 2}.
\end{proof}

To complete the proof of Theorem \ref{T:Eval}, we input the upper bound on $N_{\dist_{\leb}}$ of Proposition \ref{covering number} into Dudley's entropy estimate \eqref{E:Dudley} to conclude that for a constant $C$ depending $(M,g)$ and all $\leb$ exceeding some $\leb_0$,
\begin{align}
\label{E:Dudley Estimate} \E{\sup_{x \in M} \phi_{_\lambda}(x)}
&\leq 8\sqrt{2n} \int_{0}^{D_\lambda} \sqrt{ \log \Big (\frac{ C^{{1/}{n}}\lambda }{\ep} }\Big)\; d\ep \notag\\
&=8 D_\lambda \sqrt{2n} \int_{0}^{1} \sqrt{ \log \Big(\frac{\alpha_\lambda }{ \ep} }\Big)\; d\ep \notag,
\end{align}
where $D_\lambda=\diam(M,d_\leb)/2$ and 
\begin{equation}
\alpha_\leb:= \frac{C^{1/n}}{D_\lambda} \lambda.\label{E:Step 0}
\end{equation}
Setting $a_\lambda:= 1/ \log \alpha_\leb$ we get
\begin{equation}
\E{\sup_{x \in M} \phi_{_\lambda}(x)}\leq 8\sqrt{2} D_\leb \sqrt{n}\, \sqrt{\log \alpha_\leb} \int_0^1 (1-a_\leb \log \ep)^{1/2}\; d\ep.\label{E:Step 1}
\end{equation}
We now use the estimate 
\begin{equation}
\label{E:claim}\abs{\int_0^1 \lr{1-a_\leb\cdot \log \ep}^{1/2}d\ep-1}\leq \frac{a_{\leb}}{2}  \qquad \text{as~}a_\leb \to 0,
\end{equation}
whose proof we give in Claim \ref{Claim} below. Hence, combining \eqref{E:claim} with \eqref{E:Step 1} and the definition \eqref{E:Step 0} of $\alpha_{\leb}$, we have
\begin{equation}\label{sup bound}
\E{ \sup_{x \in M} \phi_{\lambda}(x)} \leq 8\sqrt{2} D_\lambda \sqrt{n}\sqrt{\log \leb + \log \lr{C^{1/n}/D_\leb}}\lr{1+\frac{a_{\leb}}{2}}.
\end{equation}
Proposition \ref{diameter} guarantees that $ D_\leb \leq  \frac{1}{\sqrt{\vol_g(M)}}+o(1)$. Finally, fix $\ep>0$. We claim that we may choose $\leb_0$ so that for $\leb \geq \leb_0,$
 \begin{equation}\label{E: eqn 1}
 \frac{1+\log \lr{C^{1/n}/D_\leb}}{\log \leb}\leq \lr{1+\frac{\ep}{16\sqrt{2}}}^{2/3}.
 \end{equation}
Indeed, this could be done if we had
$$\frac{1}{\log\lr{\leb C^{1/n}/D_\leb}}\leq  \lr{1+\frac{\ep}{16\sqrt{2}}}^{1/3},$$
and the latter is true since 
$$D_\leb\leq  \frac{\lr{1+\frac{\ep}{16\sqrt{2}}}^{1/3}}{\sqrt{\vol_g(M)}}.$$
We then conclude from \eqref{sup bound}  and \eqref{E: eqn 1} that
\[\E{ \sup_{x \in M} \phi_{\lambda}(x)} \leq \lr{8\sqrt{2}+\frac{\ep}{2}}\sqrt{\frac{n \log \leb}{\vol_g(M)}}\]
for all $\leb \geq \leb_0.$ Since $\phi_{\lambda}$ is symmetric, 
\begin{equation}
\E{\norm{\phi_{\lambda}}_\infty} \leq 2\, \E{\sup_{x \in M} \phi_{\lambda}(x)}\leq \lr{16\sqrt{2}+\ep}\sqrt{\frac{n \log \leb}{\vol_g(M)}}\label{E:Final}
\end{equation}
Taking the $\limsup$ as $\leb\gives \infty$ in \eqref{E:Final} and then $\ep\gives 0$ completes the proof.
\begin{remark}\label{R: proof of Eval aperiodic}
The proof of the result stated in Remark \ref{R: Eval aperiodic} for the aperiodic case follows from
using the diameter asymptotics in Remark \ref{R: diameter aperiodic} which give ${D_\leb =  \frac{1}{\sqrt{2\vol_g(M)}}+o(1)}$ instead of $D_\leb \leq  \frac{1}{\sqrt{\vol_g(M)}}+o(1)$.
\end{remark}

\begin{claim}[Proof of \eqref{E:claim}]\label{Claim}
As $a\gives 0,$
\begin{equation*}
\abs{\int_0^1 \lr{1-a\cdot \log x}^{1/2}dx -1}\leq \frac{a}{2}.
\end{equation*}
\end{claim}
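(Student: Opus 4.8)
The plan is to remove the singularity in $\log x$ at the origin by the substitution $u = -\log x$, which converts the integral into a Laplace-type integral that is easy to estimate. Writing $x = e^{-u}$, so that $dx = -e^{-u}\,du$ and the range $x\in(0,1)$ becomes $u\in(0,\infty)$, I would rewrite
\[
\int_0^1 (1 - a\log x)^{1/2}\,dx = \int_0^\infty (1 + au)^{1/2}\, e^{-u}\,du.
\]
For $x\in(0,1)$ one has $\log x < 0$, so for $a>0$ the integrand is real and satisfies $(1+au)^{1/2}\geq 1$ throughout; this sign information will handle the absolute value for free.

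Since $\int_0^\infty e^{-u}\,du = 1$, I would subtract $1$ inside the integral and write
\[
\int_0^\infty (1+au)^{1/2} e^{-u}\,du - 1 = \int_0^\infty \big[(1+au)^{1/2} - 1\big] e^{-u}\,du.
\]
The integrand here is non-negative because $(1+au)^{1/2}\geq 1$, which already yields the lower bound $\int_0^1 (1-a\log x)^{1/2}\,dx \geq 1$ and shows that the quantity inside the absolute value in the claim is non-negative, so the bars may be dropped.

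For the matching upper bound, the only real ingredient is the elementary inequality $(1+t)^{1/2}\leq 1 + \tfrac{t}{2}$ for $t\geq 0$, which follows from concavity of $\sqrt{\,\cdot\,}$ (or simply by squaring both sides). Applying it with $t = au$ gives $(1+au)^{1/2}-1\leq \tfrac{au}{2}$, whence
\[
\int_0^\infty \big[(1+au)^{1/2} - 1\big] e^{-u}\,du \leq \frac{a}{2}\int_0^\infty u\, e^{-u}\,du = \frac{a}{2},
\]
using $\int_0^\infty u\,e^{-u}\,du = \Gamma(2) = 1$. Combining the two estimates gives
\[
0 \leq \int_0^1(1-a\log x)^{1/2}\,dx - 1 \leq \frac{a}{2},
\]
which is the claim; note that this in fact holds for every $a>0$, not merely in the limit $a\to 0$. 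There is no genuine obstacle in this argument: the entire content is choosing the substitution that tames the logarithm and then invoking the concavity bound for the square root, so the estimate is exact rather than asymptotic.
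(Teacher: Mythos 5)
Your proof is correct, and it takes a genuinely different route from the paper's. The paper substitutes $u=\lr{1-a\log x}^{1/2}$ directly, which converts the integral into the Gaussian-type expression $\frac{2}{a}e^{1/a}\int_1^\infty u^2 e^{-u^2/a}\,du$; it then integrates by parts to isolate the main term $1$ and controls the remainder $e^{1/a}\int_1^\infty e^{-u^2/a}\,du$ with the classical Gaussian tail estimate $\int_x^\infty e^{-y^2/2}\,dy\leq e^{-x^2/2}/x$. Your substitution $u=-\log x$ instead produces the Laplace-type integral $\int_0^\infty (1+au)^{1/2}e^{-u}\,du$, after which the whole estimate reduces to the concavity inequality $(1+t)^{1/2}\leq 1+\frac{t}{2}$ and the evaluation $\int_0^\infty u e^{-u}\,du=1$. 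Your version is more elementary (no integration by parts, no Gaussian tail bound), makes the non-negativity of the quantity inside the absolute value transparent, and yields the inequality exactly for every $a>0$ rather than only asymptotically --- though the paper's argument, read carefully, also gives a bound valid for all $a>0$. Either proof is perfectly adequate for the application in the paper, where only the $a\to 0$ behavior is used.
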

\begin{proof}
Making the change of variables $u=\lr{1-a\log x}^{1/2}$ we get 
\begin{equation}\label{E:Integral II}
\int_0^1 \lr{1-a\cdot \log x}^{1/2}dx = \frac{2}{a}e^{\frac{1}{a}} \int_1^\infty u^2e^{-\frac{u^2}{a}}\,du.
\end{equation}
Observe that $-\frac{a}{2u}\frac{\partial}{\partial u}$ preserves $e^{-u^2/a}$ and integrate by parts in (\ref{E:Integral II}) to get
\begin{align*}
\int_0^1 \lr{1-a\cdot \log x}^{1/2}dx &=1+e^{\frac{1}{a}} \int_1^\infty e^{-\frac{u^2}{a}}\,du\\
&=1+e^{\frac{1}{a}}{\sqrt\frac{a}{2}}\int_{\sqrt{\frac{2}{a}}}^\infty e^{-\frac{t^2}{2}}\,dt
\end{align*}
Using the classical estimate
$$\int_x^{\infty} e^{-\frac{y^2}{2}}\,dy \leq \frac{e^{-\frac{x^2}{2}}}{x}$$
shows that, as $a\gives 0,$
\begin{equation*}
\abs{\int_0^1 \lr{1-a\cdot \log x}^{1/2}dx - 1}\leq \frac{a}{2},
\end{equation*}
as desired.
\end{proof}




\begin{thebibliography}{1}

\bibitem{AT}
R.~Adler and J.~Taylor.
\newblock Random fields and geometry.
\newblock {\em Springer New York}, (2007).

\bibitem{ATz}
A.~Ayache and N.~Tzvetkov.
\newblock $L^p$ properties of Gaussian Random Series. 
\newblock {\em Trans. Amer. Math. Soc}. 360 (2008), 4425-4439.


\bibitem{Bar}
P.~Bartlett.
\newblock Theoretical statistics, lecture $14.$ Electronic notes. http://www.stat.berkeley.edu/\\
~bartlett/courses/2013spring-stat210b/notes/14notes.pdf .






\bibitem{BL}
N.~Burq and G.~Lebeau.
\newblock Probabilistic Sobolev embeddings and applications. (Injections de Sobolev probabilistes et applications.)
\newblock \textit{Ann. {S}ci. {\'E}c. {N}orm. {S}up\'er.} 46 (2013), No. 6, 917-962.

\bibitem{CH}
Y.~Canzani and B.~Hanin.
\newblock Remainder estimates in the Local Weyl Law and universality of the high frequency Spectral Projector near non-focal points. 
\newblock In preparation. 

\bibitem{DG}
 J.~Duistermaat and V.~Guillemin
\newblock The spectrum of positive elliptic operators and periodic bicharacteristics.
\newblock {\em Inventiones Mathematicae}, 29 (1975) no. 1, 39--79.

\bibitem{FeZ}
 R.~Feng. and S.~Zelditch
\newblock Median and mean of the supremum of $L^2$ normalized random holomorphic fields.
\newblock {\em Journal of Functional Analysis.}  266.8 (2014), 5085-5107.



\bibitem{Hor}
L.~H\"ormander.
\newblock The spectral function of an elliptic operator.
\newblock {\em Acta mathematica},  121.1 (1968), 193-218.

\bibitem{KPW}
M.~Krishnapur, P.~Kurlberg, and I.~Wigman.
\newblock The spectral function of an elliptic operator.
\newblock {\em Ann. of Math.},  177.2 (2013), 699-737.

\bibitem{FZW}
F.~Oravecz, Z.~Rudnick, and I.~Wigman.
\newblock The Leray measure of nodal sets for random eigenfunctions on the torus.
\newblock {\em Ann. Inst. Fourier (Grenoble)},  58.1 (2008), 299-335.


\bibitem{Ivr}
 V.~Ivrii.
\newblock{The second term of the spectral asymptotics for a Laplace Beltrami operator on
 manifolds with boundary.}
\newblock {\em (Russian) Funksional. Anal. i Prolzhen.} (14) (1980), no. 2, 25-34.




\bibitem{LP}
J.~Loubes. M. and B.~Pelletier.
\newblock  A kernel-based classifier on a Riemannian manifold.
\newblock{\em Statistics and Decisions}, 26.1 (2008), 35-51.

\bibitem{Nic}
L.~Nicolaescu
\newblock  Complexity of random smooth functions on compact manifolds.
\newblock{\em Indiana U. Math. J} (to appear), preprint available: http://arxiv.org/abs/1201.4972.

\bibitem{Neu}
J.~Neuheisel.
\newblock { The asymptotic distribution of nodal sets on spheres}.
\newblock 2000.
\newblock {\em Thesis (Ph.D.) The Johns Hopkins University}.





\bibitem{SZ Levy}
B.~Shiffman. and S.~Zelditch.
\newblock Random polynomials of high degree and Levy concentration of measure.
\newblock {\em Asian J. Math.} 7, Special issue dedicated to Yum-Tong Siu, (2003), 627-646.

\bibitem{Tia}
G.~Tian.
\newblock On a set of polarized {K}{\"a}hler metrics on algebraic manifolds.
\newblock {\em J. {Diff}. {G}eom.} 32 (2003), 99-130.


\bibitem{Tz}
N.~Tzvetkov.
\newblock Riemannian analog of a {P}aley-{Z}ygmun theorem.
\newblock {\em Seminare E. D. P.} 15 (2008-2009).

\bibitem{Van}
J.~VanderKam.
\newblock {$L^\infty$} norms and quantum ergodicity on the sphere.
\newblock {\em Internat. Math. Res. Not.}, (7):329--347, 1997.


\bibitem{Zel2}
S.~Zelditch.
\newblock Real and complex zeros of Riemannian random waves.
\newblock {\em Contemporary Mathematics} 14 (2009), 321.

\bibitem{Zel3}
S.~Zelditch.
\newblock Fine Structure of Zoll Spectra.
\newblock {\em Journal of Functional Analysis} 143.2 (1997), 415 - 460.

\bibitem{Zel4}
S.~Zelditch.
\newblock Szeg{\"o} Kernels and a Theorem of {T}ian.
\newblock {\em Int. Math Res. Notices} 6 (1998), 317 - 331.


\end{thebibliography}
\end{document}